\providecommand\given{}
\newcommand\SetSymbol[1][]{%
  \nonscript\:#1\vert
  \allowbreak
  \nonscript\:
  \mathopen{}}
\DeclarePairedDelimiterX\Set[1]{\{}{\}}{%
  \renewcommand\given{\SetSymbol[\delimsize]}
  #1}
\DeclarePairedDelimiterXPP\pospart[1]{}{(}{)}{^+}{#1}
\DeclarePairedDelimiterXPP\negpart[1]{}{(}{)}{^-}{#1}
\newcommand\R{\mathbb{R}}
\newcommand\PP{\mathcal{P}}
\newcommand\QQ{\mathcal{Q}}
\newtheorem{thm}{Theorem}[section]
\newtheorem{prop}[thm]{Proposition}
\newtheorem{cor}[thm]{Corollary}
\theoremstyle{definition}
\newtheorem{defn}[thm]{Definition}
\newtheorem{ex}[thm]{Example}
\DeclareMathOperator*{\gr}{gr}
\DeclareMathOperator*{\conv}{conv}
\DeclareMathOperator*{\cone}{cone}
\DeclareMathOperator*{\dom}{dom}
\DeclareMathOperator*{\lin}{lin}
\newcommand{\leqnomode}{\tagsleft@true\let\veqno\@@leqno}
\newcommand{\reqnomode}{\tagsleft@false\let\veqno\@@eqno}
\title{The natural ordering cone of a polyhedral convex set-valued objective mapping}
\author{Andreas L{\"o}hne\thanks{Friedrich Schiller University Jena, Faculty of Mathematics and Computer Science, 07737 Jena, Germany, andreas.loehne@uni-jena.de}}
\begin{document}
\maketitle

\begin{abstract} \noindent For a given polyhedral convex set-valued mapping we define a polyhedral convex cone which we call the natural ordering cone. We show that the solution behavior of a polyhedral convex set optimization problem can be characterized by this cone. Under appropriate assumptions the natural ordering cone is proven to be the smallest ordering cone which makes a polyhedral convex set optimization problem solvable.    

\medskip
\noindent
{\bf Keywords:} set optimization, vector linear programming, multiple objective linear programming
\medskip

\noindent{\bf MSC 2010 Classification: 90C99, 90C29, 90C05}
\end{abstract}

\section{Introduction}

Polyhedral convex set optimization is the simplest problem class in set optimization, comparable to linear programming in the field of scalar optimization. Vector linear programming and multiobjective linear programming, see e.g.\ \cite{Zeleny74,Loe11,Luc16}, are special cases. Nevertheless there are some open questions with respect to the solution behavior of polyhedral convex set optimization problems, which were pointed out in \cite{HeyLoe}: A multiple objective linear program has a solution whenever the problem is feasible and the so-called \emph{upper image} is free of lines, see e.g.\ \cite[Corollary 6]{LoeWei16}. A 
natural extension of the solution concept to the set-valued case reveals a different behavior: A solution does not necessarily exist under these assumptions, see the examples in \cite{HeyLoe}. It seems to be a consequence of this issue that solution methods exist only for the case of bounded polyhedral set optimization problems so far \cite{LoeSch13}. 

Here we provide a precise description of the differences in the solution behavior between vector linear programming and polyhedral convex set optimization. It turns out that an additional assumption is required in the set-valued case. To describe this assumption, we introduce the \emph{natural ordering cone} of a polyhedral convex set-valued mapping. In order to have the same solution behavior for the two problem classes it is required that the ordering cone is a superset of the natural ordering cone.

This paper uses the \emph{complete lattice approach} to set optimization. This means that the solution concept is based on both \emph{infimum attainment} (with respect to a suitable complete lattice) and \emph{minimality}. Note that several applications of set optimization, in particular those in mathematical finance related to set-valued measures of risk, are based on this complete lattice approach. Bibliographic notes on different approaches to set optimization and corresponding applications can be found, for instance, in \cite{HamHeyLoeRudSch15, KhaTamZal15}.

The article is structured as follows. In Section \ref{sec:prel} the problem setting is described and necessary notation is recalled. The natural ordering cone is introduced in Section \ref{sec:nat_cone}. The main result, Theorem \ref{thm2}, provides a characterization of the solution behavior of a polyhedral convex set optimization problem. Section \ref{sec:vr} is devoted to the special case of vector linear programming. We point out that the natural ordering cone is not relevant in this case. In the last section we slightly adapt the solution concept taking into account the results on the natural ordering cone. Throughout we consider a running example to illustrate the ideas.

\section{Problem setting and preliminaries}\label{sec:prel}

\noindent Let $F:\R^n\rightrightarrows \R^q$ be a \emph{polyhedral convex} set-valued mapping, that is, the graph of $F$,
$$ \gr F \coloneqq \Set{(x,y) \in \R^n\times \R^q \given y \in F(x)},$$
is a convex polyhedron. A \emph{polyhedral convex set-optimization problem} is the problem to minimize $F$ with respect to the partial ordering $\supseteq$ (inverse set inclusion). Frequently in polyhedral convex set optimization, $F$ is given together with a polyhedral convex cone $C$, called \emph{ordering cone}. Then, the goal is to minimize the set-valued mapping 
$$F_C:\R^n\rightrightarrows \R^q,\quad F_C(x) \coloneqq F(x) + C,$$
which results in the problem
\leqnomode
\begin{gather}\label{eq:p}\tag{P}
  \min F(x) + C\quad \text{ s.t. }\; x \in \R^n.
\end{gather}
\reqnomode
Let $C\subseteq \R^q$ be a nonempty polyhedral convex cone. Then 
$$ y^1 \leq_C y^2 \; \iff \; y^2 \in \Set{y^1} +  C$$
defines a reflexive and transitive ordering on $\R^q$. Problem \eqref{eq:p} can be interpreted as minimizing an arbitrary polyhedral convex set-valued mapping $F$ with respect to the reflexive and transitive ordering $\preccurlyeq_C$ on the power set of $\R^q$, defined by
$$ Y^1 \preccurlyeq_C Y^2  \;\iff\; Y^2 \subseteq Y^1 + C.$$
Of course, the original problem of minimizing $F$ without mentioning any ordering cone is re-obtained by the choice $C=\Set{0}$. 

The \emph{upper image} of \eqref{eq:p} is defined by
$$ \PP \coloneqq C + \bigcup_{x \in \R^n} F(x).$$ 
It is a convex polyhedron, for details, see e.g.\ \cite{HeyLoe}. 
The upper image can be interpreted as the infimum of $F$ over $\R^n$ with respect to the ordering $\preccurlyeq_C$, i.e., we can write $\PP = \inf_{x \in \R^n} F(x)$,
see e.g.\ \cite{HamHeyLoeRudSch15}. 

The \emph{recession mapping} of $F:\R^n\rightrightarrows \R^q$ is the set-valued mapping $G:\R^n\rightrightarrows \R^q$ whose graph is the recession cone of the graph of $F$, i.e., we have
$$ \gr G = 0^+\gr F.$$

Now we consider the homogeneous problem
\leqnomode
\begin{gather}\label{eq:q}\tag{Q}
  \min G(x)+C \quad \text{ s.t. }\; x \in \R^n.
\end{gather}
\reqnomode
Its upper image is denoted by
$$ \QQ \coloneqq C + \bigcup_{x \in \R^n} G(x).$$
It is the recession cone of $\PP$, i.e.,
$$\QQ = 0^+ \PP,$$
for details see e.g.\ \cite{HeyLoe}.
In particular, this means that $\QQ$ is a polyhedral convex cone. Moreover, $G(0)$ is a polyhedral convex cone and we have, see e.g.\ \cite{HeyLoe},
\begin{equation}\label{eq:451}
	F(x) = F(x) + G(0).
\end{equation}
Let $G_C$ denote the recession function of $F_C$. Then we have $G_C(x)=G(x)+C$ for all $x \in \R^n$, see e.g.\ \cite{HeyLoe}. 

We denote by $\lin C \coloneqq C\cap(-C)$ the lineality space of $C$.
A polyhedral convex cone $C\subseteq \R^q$ is called a \emph{regular} ordering cone for a polyhedral convex set-valued objective mapping $F:\R^n\rightrightarrows \R^q$ if
\begin{equation}\label{eq:reg}
	\lin C \subseteq G(0) \subseteq C.
\end{equation}
Regularity provides a connection between the objective function and the ordering cone. 
There are three reasons for such an assumption: First, some conditions in the main result, Theorem \ref{thm2} below, become simpler. Secondly, in view of existing applications, regularity seems to be satisfied in many cases. And finally, the main result for the general case without regularity can be obtained easily as a corollary.
The first inclusion means that if $C$ contains a line then also the common recession cone $G(0)$ of the nonempty values $F(x)$ (compare \cite[Proposition 2]{HeyLoe}) must contain this line. The second inclusion means that the common recession cone $G(0)$ of the nonempty values $F(x)$ is contained in the ordering cone $C$.

\begin{prop}\label{prop11}
	$G(0)$ is a regular ordering cone for $F$. Moreover, $G(0)+C$ is a regular ordering cone for $F_C$. 		
\end{prop}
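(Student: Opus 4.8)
The plan is to reduce both claims to the elementary observation that $\lin D = D \cap (-D) \subseteq D$ holds for every set $D$, once the cones occurring in the regularity condition \eqref{eq:reg} have been correctly identified for the two mappings $F$ and $F_C$.

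First I would treat $F$ itself. Here $G$ denotes the recession mapping of $F$, and it was already recalled in Section~\ref{sec:prel} that $G(0)$ is a polyhedral convex cone, so $G(0)$ qualifies as an ordering cone. To say that $G(0)$ is regular for $F$ means, by \eqref{eq:reg} with $C$ replaced by $G(0)$, that $\lin G(0) \subseteq G(0) \subseteq G(0)$; the right-hand inclusion is an identity and the left-hand one is the observation above. This settles the first assertion.

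For the second assertion I would first note that $F_C$ is again polyhedral convex, since $\gr F_C = \gr F + (\Set{0}\times C)$ is a Minkowski sum of two convex polyhedra and hence a convex polyhedron; consequently the notion of a regular ordering cone applies to $F_C$, but now with the recession mapping $G_C$ of $F_C$ playing the role of $G$. By the identity $G_C(x) = G(x) + C$ recalled in Section~\ref{sec:prel} we have $G_C(0) = G(0) + C$, which is a polyhedral convex cone, being the Minkowski sum of the polyhedral convex cones $G(0)$ and $C$. Hence it qualifies as an ordering cone for $F_C$, and regularity of $G(0)+C$ for $F_C$ now reads $\lin(G(0)+C) \subseteq G(0)+C \subseteq G(0)+C$, which again holds for the trivial reason above.

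There is no genuine obstacle here; the only point requiring care is the bookkeeping. One must remember that the cone appearing in \eqref{eq:reg} must be the value at $0$ of the recession mapping of the mapping under consideration, that is, $G(0)$ for $F$ and $G_C(0)=G(0)+C$ for $F_C$, and that one should invoke the facts, both already available in Section~\ref{sec:prel}, that $G(0)$ is a polyhedral convex cone and that $G_C(x)=G(x)+C$. With these at hand nothing further needs to be computed.
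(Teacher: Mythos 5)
Your proposal is correct and follows essentially the same route as the paper: identify the cone in the regularity condition \eqref{eq:reg} as the value at $0$ of the recession mapping of the mapping under consideration (namely $G(0)$ for $F$ and $G_C(0)=G(0)+C$ for $F_C$), after which both inclusions reduce to the trivial fact $\lin D \subseteq D$. The extra remarks on polyhedrality of $F_C$ and of $G(0)+C$ are fine but not needed beyond what the paper already recalls in Section~\ref{sec:prel}.
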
 
\begin{proof}
	Obviously, we have $\lin G(0) \subseteq G(0) \subseteq G(0)$.
	Since $G_C(0) = G(0)+C$, we have $\lin (G(0)+C) \subseteq G_C(0) \subseteq G(0) + C$.
\end{proof}
Proposition \ref{prop11} shows that $G(0)$ is the smallest regular ordering cone for $F$. Moreover, because of \eqref{eq:451}, the same problem \eqref{eq:p} is obtained if $F$ is replaced by $\bar F \coloneqq F_C$ and $C$ is replaced by $\bar C \coloneqq G(0)+C$. But, $\bar C$ is always regular for $\bar F$.

We denote by 
$$G(\R^n) \coloneqq \bigcup_{x \in \R^n} G(x)$$
the {\em image} of $G$ over $\R^n$, which is a polyhedral convex cone by the same arguments as for the upper image $\QQ$ of \eqref{eq:q}.
The cone $G(\R^n)$ is a regular ordering cone for $F$ if $\lin G(\R^n) \subseteq G(0)$ in particular, if $\lin G(\R^n) = \Set{0}$. If $\lin G(\R^n) \supsetneq \Set{0}$, $G(\R^n)$ is a regular ordering cone for $F_{\lin G(\R^n)}$.

Subsequently in this article, we consider problem \eqref{eq:p} for a polyhedral convex set-valued objective mapping $F:\R^n\rightrightarrows \R^q$ and a polyhedral convex ordering cone $C \subseteq \R^q$ which is regular for $F$. We are interested in the existence of solutions for \eqref{eq:p}. The following solution concept for \eqref{eq:p} was introduced in \cite{Loe} with precursors in \cite{HeyLoe11, LoeSch13, HeyLoe}. Here $\cone Y$ denotes the \emph{conic hull}, i.e., the smallest (with respect to $\subseteq$) convex cone containing $Y$. We set $\cone \emptyset = \Set{0}$.

A tuple $(\bar{S},\hat{S}$) of finite sets $\bar{S} \subseteq \dom F$, $\bar{S} \neq \emptyset$ and $\hat{S} \subseteq \dom G \setminus \{0\}$ is called a {\em finite infimizer} \cite{HeyLoe} for problem \eqref{eq:p} if 
\begin{equation}{\label{finiteinfimizer}}
	\PP \subseteq C + \conv \bigcup_{x \in \bar{S}} F(x) + \cone \bigcup_{x \in \hat{S}} G(x).
\end{equation} 
An element $\bar{x} \in \dom F$ is said to be a {\em minimizer} or a {\em minimizing point} for \eqref{eq:p} if 
$$ \not\exists x \in \R^n:\; F(x) + C \supsetneq F(\bar x)+C.$$
A nonzero element $\hat{x} \in \dom G$ is said to be a {\em minimizer} or a {\em minimizing direction} for \eqref{eq:p} if
$$ \not\exists x \in \R^n:\; G(x)+C \supsetneq G(\hat x)+C.$$
A finite infimizer $(\bar{S},\hat{S})$ is called a {\em solution} to \eqref{eq:p} if all elements of  $\bar{S}$ are minimizing points and all elements of $\hat{S}$ (if any) are minimizing directions.

Existence of solutions to \eqref{eq:p} can be characterized by the homogeneous problem \eqref{eq:q}. 
\begin{prop}[{\cite[Theorem 3.3]{Loe}}]\label{prop22} Let $F:\R^n\rightrightarrows \R^q$ be a polyhedral convex set-valued objective mapping  and let $C \subseteq \R^q$ be a polyhedral convex ordering cone which is regular for $F$. Then \eqref{eq:p} has a solution if and only if \eqref{eq:p} is feasible and $0$ is a minimizer for \eqref{eq:q}, that is
	$$ \not\exists x \in \R^n:\; G(x)+C \supsetneq C.$$
\end{prop}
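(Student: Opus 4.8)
Throughout I write $F_C(x)=F(x)+C$ and use that $G_C$, the recession mapping of $F_C$, satisfies $G_C(x)=G(x)+C$; regularity then gives $0^+F_C(x)=G_C(0)=G(0)+C=C$, so that the stated condition ``$0$ is a minimizer for \eqref{eq:q}'' reads $\not\exists x:G_C(x)\supsetneq G_C(0)$. The plan is to prove the two implications separately. The forward one is short: if $(\bar S,\hat S)$ is a solution then $\emptyset\ne\bar S\subseteq\dom F$, so \eqref{eq:p} is feasible; for the other part I would argue by contraposition. Assuming $G(x_0)+C\supsetneq C$ for some $x_0$, I note $v\in G_C(x_0)$ gives $(x_0,v)\in0^+\gr F_C$, so adding $(x_0,v)$ to points of $\gr F_C$ yields the recession inequality $F_C(\bar x+x_0)\supseteq F_C(\bar x)+G_C(x_0)$ for every $\bar x\in\dom F$. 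The right-hand side contains $F_C(\bar x)$ (as $C\subseteq G_C(x_0)$) and strictly so, since $F_C(\bar x)+G_C(x_0)=F_C(\bar x)$ would force $G_C(x_0)\subseteq0^+F_C(\bar x)=C$. Hence $F_C(\bar x+x_0)\supsetneq F_C(\bar x)$, no point of $\dom F$ is a minimizer, and \eqref{eq:p} has no solution.

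For the converse I assume \eqref{eq:p} feasible and $\not\exists x:G_C(x)\supsetneq G_C(0)$. First I would exhibit some finite infimizer: from a Minkowski--Weyl representation $\gr F=\conv\{(x^i,y^i)\}+\cone\{(u^j,v^j)\}$, projection onto $\R^q$ gives $\PP=C+\conv\{y^i\}+\cone\{v^j\}$; the generators with $u^j=0$ have $v^j\in G(0)\subseteq C$ and are absorbed into $C$, while $y^i\in F(x^i)$ and $v^j\in G(u^j)$ for the rest, so $\bigl(\{x^i\},\{u^j:u^j\ne0\}\bigr)$ is a finite infimizer. The theorem then reduces to two domination statements: (a) every $\bar x\in\dom F$ has $F_C(x^*)\supseteq F_C(\bar x)$ for some minimizer $x^*$; (b) every $\hat x\in\dom G$ with $G_C(\hat x)\not\subseteq C$ has $G_C(x^*)\supseteq G_C(\hat x)$ for some minimizing direction $x^*$. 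Granting these, I would replace each point of the finite infimizer by a dominating minimizer, each relevant direction by a dominating minimizing direction, and discard each direction $\hat x$ with $G_C(\hat x)\subseteq C$ (for which already $\cone G(\hat x)\subseteq C$); domination only enlarges the right-hand side of the finite-infimizer inclusion, so the modified pair is a solution.

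The heart of the matter is (a) and its homogeneous counterpart (b). For (a), writing $F_C(\bar x)=\conv\{y^1,\dots,y^m\}+C$ and using that every nonempty fibre of $\gr F_C$ has recession cone $G_C(0)=C$, the set $M:=\{x\in\dom F:F_C(x)\supseteq F_C(\bar x)\}=\bigcap_i\{x:(x,y^i)\in\gr F_C\}$ is a polyhedron containing $\bar x$, and any $\subseteq$-maximal element of $\{F_C(x):x\in M\}$ is automatically $\subseteq$-maximal over all of $\dom F$, hence a minimizer dominating $\bar x$. So everything rests on showing that the polyhedral set-valued map $x\mapsto F_C(x)$ has a $\subseteq$-maximal fibre over $M$, and this is the step I expect to be the main obstacle. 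My approach would be to subdivide $M$ so that the combinatorial type of the fibre is constant on each cell, making $x\mapsto F_C(x)$ parametric-affine there, and then to show that failure of a maximal fibre to exist exposes a recession direction $d\in0^+M$ along which the fibres strictly increase without bound, and from it a recession direction $(d,w)$ of $\gr F_C$ with $-w\in C$ but $w\notin\lin C$; then $w\in G_C(d)$ and $0=w+(-w)\in G_C(d)+C=G_C(d)$, so $G_C(d)\supsetneq C$, contradicting the hypothesis. For (b) I would run the same argument on $G_C$ --- whose own recession mapping is $G_C$ --- over $M_G:=\{x\in\dom G:G_C(x)\supseteq G_C(\hat x)\}$, a polyhedron not containing $0$ since $G_C(\hat x)\not\subseteq C=G_C(0)$, so a $\subseteq$-maximal fibre over $M_G$ is attained at a nonzero point and is a minimizing direction; its existence again comes down to the hypothesis that no $d$ satisfies $G_C(d)\supsetneq C$. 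The careful accounting of how the fibres of a parametric polyhedron can grow along recession directions --- converting ``no maximal fibre'' into ``some $d$ with $G_C(d)\supsetneq C$'' --- is where the real work lies.
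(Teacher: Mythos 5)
The paper does not reprove this statement from scratch: its proof is a two-line reduction, citing \cite[Theorem 3.3]{Loe} (which gives the criterion ``no $x$ with $G(x)+C\supsetneq G(0)+C$'') and then using regularity to replace $G(0)+C$ by $C$. You instead set out to reprove the cited theorem itself. Your necessity direction is correct and complete: from $G_C(x_0)\supsetneq C$ you get $F_C(\bar x+x_0)\supseteq F_C(\bar x)+G_C(x_0)\supsetneq F_C(\bar x)$ for every $\bar x\in\dom F$ (using $0^+F_C(\bar x)=G(0)+C=C$ by regularity), so no minimizing points and hence no solution. The construction of a finite infimizer from a Minkowski--Weyl representation and the bookkeeping for replacing infimizer elements by dominating minimizers are also fine in outline.

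The genuine gap is exactly where you say ``the real work lies'': the sufficiency direction hinges on the claim that, under the hypothesis $\not\exists x:\,G_C(x)\supsetneq C$, the family $\Set{F_C(x)\given x\in M}$ (and its homogeneous analogue over $M_G$) possesses a $\subseteq$-maximal element, and you only describe a plan for this (cell subdivision by combinatorial type, then extraction of a recession direction $d\in 0^+M$ with $(d,w)\in 0^+\gr F_C$, $-w\in C$, $w\notin C$). This implication is not routine: failure of maximality only says that every fibre is strictly contained in some other fibre, and the improving steps may a priori change direction and cell at every stage; compressing this into a single direction $d$ with $G_C(d)\supsetneq C$ is precisely the substantive content of \cite[Theorem 3.3]{Loe}, and Example \ref{ex11} shows the phenomenon you must rule out is real. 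As written, the ``if'' part of the proposition is therefore asserted but not proved; either carry out that polyhedral argument in detail or do what the paper does and invoke \cite[Theorem 3.3]{Loe}, reducing the statement to the observation that regularity gives $G(0)+C=C$.
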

\begin{proof}
	In \cite[Theorem 3.3]{Loe} it was shown that \eqref{eq:p} has a solution if and only if \eqref{eq:p} is feasible and there is no $x \in \R^n$ with $G(x)+C \supsetneq G(0)+C$. Since $C$ is regular for $F$, we obtain $G(0)+C=C$.
\end{proof}

\section{The natural ordering cone} \label{sec:nat_cone}

Let us assume for the moment that in problem \eqref{eq:p} the upper image $\PP$ is free of lines. Then a polyhedral convex cone $C\subseteq \R^q$ is regular for $F$ if 
$G(0) \subseteq C$.
If an ordering cone $C$ is enlarged to the cone $G(\R^n)+C = \QQ$, the resulting problem \eqref{eq:p} has a solution. 
This can be seen by Proposition \ref{prop22} or directly. In particular, for such a solution $(\bar S,\hat S)$, the direction part is empty, i.e., $\hat S = \emptyset$. Indeed, for $C=\QQ$ we have $G(x) \subseteq C$ for all $x$ which implies that there are no minimizing directions. Thus, enlarging $C$ to $\QQ$ makes the problem {\em bounded}, for a definition and details see e.g.\ \cite{LoeSch13,HeyLoe,Loe}.

For the smallest choice of a regular ordering cone, namely $C=G(0)$, \eqref{eq:p} does not necessarily have a solution.

\begin{ex}\label{ex11}
	Let $F: \R^2 \rightrightarrows \R^2$, where $gr F$ is defined by the inequalities
	$$ x_1 \geq 0,\; x_2 \geq 0,\; y_2 \geq x_2 ,\; x_2 + y_1 \geq 0,\; x_1 + 2x_2 + y_1 \geq y_2.$$
	Since $\gr F$ is a cone, we have $G = F$. Moreover,
	$$
		G(0,0)= \Set{y \in \R^2\given y_2 \geq 0,\; y_1 \geq y_2}.
	$$
	It is a proper subset of 
	$$	G(1,0)= \Set{y \in \R^2 \given y_2 \geq 0,\;  y_1 \geq 0,\; y_1 \geq y_2 - 1}. $$
	We deduce
	$G(1,0)+G(0,0) = G(1,0) \supsetneq G(0,0)$ which means by Proposition \ref{prop22}, that \eqref{eq:p} with ordering cone $C=G(0,0)$ does not have a solution.
	If we choose the regular ordering cone $C=G(\R^2)$, where we have
	$$G(\R^2) = \Set{y \in \R^2\given y_2 \geq 0,\; y_1 \geq -y_2},$$
	we see that $(\Set{0},\emptyset)$ is a solution to \eqref{eq:p}.
\end{ex}

This example shows that the two extremal regular ordering cones $G(0)$ and $G(\R^n)$ can be of limited use. If the ordering cone is too small, a solution may not exist. If it is too big, a solution can provide a limited amount of information. Thus it is natural to ask whether there is a smallest regular ordering cone $C$ such that \eqref{eq:p} still has a solution. We show that such a cone exists: The set 
$$ K \coloneqq  K_F \coloneqq \Set{y \in \R^q \given \exists x \in \R^n:\; y \in G(x),\; 0 \in G(x)}$$
is called the \emph{natural ordering cone} of the set-valued mapping $F$.

\begin{prop}\label{prop4} Let $\gr F \neq \emptyset$. The natural ordering cone $K$ of $F$ is a polyhedral convex cone satisfying
	\begin{equation}\label{eq:kf1}
		 G(0) \subseteq K \subseteq G(\R^n).
	\end{equation}
\end{prop}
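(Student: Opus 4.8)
The plan is to exhibit $K$ as the image of an explicitly described polyhedral convex cone under a linear projection, and then to read off the two inclusions directly from the definition of $K$.

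First I would record the basic structural facts. Since $\gr F \neq \emptyset$, its recession cone $\gr G = 0^+\gr F$ is a nonempty polyhedral convex cone; in particular $(0,0) \in \gr G$, so $0 \in G(0)$. Introduce the linear maps $\mapping{L}{\R^n\times\R^q}{\R^n\times\R^q}$, $L(x,y) = (x,0)$, and $\mapping{\pi}{\R^n\times\R^q}{\R^q}$, $\pi(x,y) = y$, and consider
$$ A \coloneqq \gr G \cap L^{-1}(\gr G) = \Set{(x,y) \in \R^n\times\R^q \given (x,y) \in \gr G,\ (x,0)\in\gr G}. $$
Reading $(x,y) \in \gr G$ as $y \in G(x)$ and $(x,0)\in\gr G$ as $0 \in G(x)$, one sees immediately that $K = \pi(A)$.

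The core of the argument is then that the class of polyhedral convex cones is closed under the operations used above. The set $L^{-1}(\gr G)$ is the preimage of a polyhedron under a linear map, hence polyhedral, and it is a cone because $\gr G$ is a cone and $L$ is linear; intersecting with the polyhedral cone $\gr G$ keeps both properties, so $A$ is a polyhedral convex cone. Finally, the image of a polyhedral convex cone under a linear map is again a polyhedral convex cone (projection of a polyhedron is a polyhedron, e.g.\ by Fourier--Motzkin elimination, and a linear image of a cone is a cone), so $K = \pi(A)$ is a polyhedral convex cone.

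It remains to verify \eqref{eq:kf1}. For $G(0) \subseteq K$: if $y \in G(0)$ then $(0,y) \in \gr G$ and, as noted, $(0,0)\in\gr G$, so $(0,y) \in A$ and $y = \pi(0,y) \in K$. For $K \subseteq G(\R^n)$: if $y \in K$ then $y \in G(x)$ for some $x \in \R^n$, hence $y \in \bigcup_{x\in\R^n} G(x) = G(\R^n)$. I do not anticipate a genuine obstacle; the only point needing (standard) care is the closure of polyhedral convex cones under intersections, linear preimages, and linear images.
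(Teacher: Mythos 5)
Your proof is correct and takes essentially the same route as the paper: both exhibit $K$ as the image under the projection $(x,y)\mapsto y$ of the polyhedral convex cone $\Set{(x,y)\given (x,y)\in\gr G,\ (x,0)\in\gr G}$, the paper writing this cone concretely via an H-representation ($Ax+By\geq 0,\ Ax\geq 0$) while you invoke the standard closure of polyhedral convex cones under intersections, linear preimages and linear images. The verification of the two inclusions in \eqref{eq:kf1} is identical to the paper's argument.
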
 
\begin{proof}
	Since $\gr F$ is a convex polyhedron, it has an H-representation 
	$$ \gr F = \Set{(x,y) \in \R^n\times \R^q\given Ax + B y \geq b}.$$
	If the right-hand side in the inequalities is replaced by zero, we obtain an H-representation of the recession cone of $\gr F \neq \emptyset$, i.e., of $\gr G$. Thus we have
	$$ K = \Set{y \in \R^q \given \exists x \in \R^n:\; A x + B y \geq 0,\; A x \geq 0},$$
	which shows that $K$ is a polyhedral convex cone.
	
	To show the first inclusion in \eqref{eq:kf1}, let $y \in G(0)$. Since $G(0)$ is a polyhedral convex cone, we also have $0 \in G(0)$, which implies $y \in K$. To show the second inclusion in \eqref{eq:kf1}, let $y \in K$. By the definition of $K$, there exists $x \in \R^n$ such that $y \in G(x) \subseteq G(\R^n)$. 
\end{proof}

The proposition shows that $K$ is regular for $F$, whenever $\lin K \subseteq G(0)$, in particular, if $K$ is free of lines. 
Both inclusions in \eqref{eq:kf1} can be strict.

\begin{ex}\label{ex12}
	Let $F$ be defined as in Example \ref{ex11}. Then we have
	$$ K = \Set{y \in \R^2\given y_2 \geq 0,\; y_1 \geq 0},$$
	which shows that $G(0) \subsetneq K \subsetneq G(\R^2)$.
\end{ex}

The following result characterizes the existence of solutions of a polyhedral convex set optimization problem \eqref{eq:p} by two conditions. In particular, the result states that $C \supseteq K$ is necessary for the existence of solutions.

\begin{thm}\label{thm2} Let $F:\R^n\rightrightarrows \R^q$ be a polyhedral convex set-valued mapping and let $C \subseteq \R^q$ be a polyhedral convex ordering cone which is regular for $F$. Then  \eqref{eq:p} has a solution if and only if \eqref{eq:p} is feasible,
	\begin{equation}\label{eq:cond}
		-C \cap \QQ \subseteq C
	\end{equation}
	and 
	\begin{equation}\label{eq:cond1}
		C \supseteq K.
	\end{equation}	
\end{thm}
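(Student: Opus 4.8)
The plan is to reduce the statement to Proposition~\ref{prop22}, which asserts that a feasible problem \eqref{eq:p} has a solution exactly when there is no $x\in\R^n$ with $G(x)+C\supsetneq C$. Since $C$ is a nonempty convex cone, a short computation with the identities ``$0\in C$'' and ``$C+C=C$'' shows that $C\subseteq G(x)+C$ holds precisely when $G(x)\cap(-C)\neq\emptyset$, and that $G(x)+C\subseteq C$ holds precisely when $G(x)\subseteq C$. Hence the second condition in Proposition~\ref{prop22} is equivalent to
$$ \forall x\in\R^n:\quad G(x)\cap(-C)\neq\emptyset \ \Longrightarrow\ G(x)\subseteq C, $$
and it remains to show that, under regularity of $C$ for $F$, this implication is equivalent to \eqref{eq:cond} together with \eqref{eq:cond1}.

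For the direction that a solution forces \eqref{eq:cond} and \eqref{eq:cond1}, I argue straight from the displayed implication. If $y\in K$, choose $x$ with $y\in G(x)$ and $0\in G(x)$; then $0\in G(x)\cap(-C)$, so $G(x)\subseteq C$ and in particular $y\in C$, giving \eqref{eq:cond1}. If $y\in-C\cap\QQ$, use $\QQ=C+G(\R^n)$ to write $y=c+g$ with $c\in C$ and $g\in G(x)$ for some $x$; then $-g=c-y\in C$, so $g\in G(x)\cap(-C)$, hence $g\in C$ and $y=c+g\in C$, giving \eqref{eq:cond}.

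The converse is where regularity is used. Assume \eqref{eq:cond} and \eqref{eq:cond1}, let $x$ satisfy $G(x)\cap(-C)\neq\emptyset$, and fix $g_0$ in this intersection. Since $g_0\in G(x)\subseteq G(\R^n)\subseteq\QQ$ and $g_0\in-C$, condition \eqref{eq:cond} yields $g_0\in C$, hence $g_0\in C\cap(-C)=\lin C$. The crucial step is to deduce $0\in G(x)$: as $\lin C$ is a subspace, $-g_0\in\lin C\subseteq G(0)$ by regularity, so $(0,-g_0)\in\gr G$; adding this to $(x,g_0)\in\gr G$ and using that $\gr G$, being a convex cone, is closed under addition gives $(x,0)\in\gr G$, i.e.\ $0\in G(x)$. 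Then every $g\in G(x)$ belongs to $K$ by the definition of the natural ordering cone, so $G(x)\subseteq K\subseteq C$ by \eqref{eq:cond1}. This establishes the displayed implication, and Proposition~\ref{prop22} (with feasibility) delivers a solution.

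I expect the middle manoeuvre to be the only real obstacle: passing from ``some point of $G(x)$ lies in $-C$'' to ``$0\in G(x)$''. This fails for a general ordering cone and genuinely needs both \eqref{eq:cond} (to pull $g_0$ into $C$, hence into $\lin C$) and the regularity inclusion $\lin C\subseteq G(0)$ (to push $-g_0$ into $G(0)$ and then exploit the cone structure of $\gr G$). The remaining arguments are routine manipulations with $+C$ and with the representation $\QQ=C+G(\R^n)$.
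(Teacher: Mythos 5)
Your proposal is correct and follows essentially the same route as the paper: both reduce the statement to Proposition~\ref{prop22} and the key step is identical, namely using \eqref{eq:cond} together with $\lin C\subseteq G(0)$ to turn a point of $G(x)\cap(-C)$ into $0\in G(x)$ (via the cone structure of $\gr G$), whence $G(x)\subseteq K\subseteq C$, while the necessity of \eqref{eq:cond} and \eqref{eq:cond1} is the same argument phrased contrapositively. Your reformulation of the Proposition~\ref{prop22} criterion as the implication ``$G(x)\cap(-C)\neq\emptyset \Rightarrow G(x)\subseteq C$'' is a clean repackaging, not a different proof.
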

\begin{proof} It is sufficient to consider the case where \eqref{eq:p} is feasible.
	
	 Assume that \eqref{eq:cond} and \eqref{eq:cond1} hold but \eqref{eq:p} does not have a solution. By Proposition \ref{prop22}, there exists some $x \in \R^n$ such that 
	$G(x)+C \supsetneq C$. Since $0 \in C \subseteq G(x)+C$, there exists some $z \in G(x)$ with $z \in -C$. 
	By \eqref{eq:cond} and regularity we obtain $-z \in \lin C \subseteq G(0)$. Thus $0 \in G(x) + \Set{-z} \subseteq G(x) + G(0) = G(x)$. This yields the inclusion $G(x) \subseteq K$. By \eqref{eq:cond1}, $G(x) \subseteq C$ and thus $G(x) + C \subseteq C$, which contradicts the strict converse inclusion above. 
	
Let \eqref{eq:cond1} be violated, i.e., there exists $y \in K$  with $y \not\in C$. By the definition of $K$, there exists $x \in \R^n$ such that $y \in G(x)$ and $0 \in G(x)$. The latter statement yields
$C \subseteq G(x)+C$. Since $y \notin C$ and $y \in G(x) \subseteq G(x) +C$ we obtain the strict inclusion  $C \subsetneq G(x)+C$, which means that \eqref{eq:p} has no solution, by Proposition \ref{prop22}.
	
Let \eqref{eq:cond} be violated, i.e., there exists $z \in -C \setminus C$ such that $z \in G(x)+C$ for some $x \in \R^n$. We deduce $0 \in G(x) + C + C = G(x) + C$ and hence $C \subseteq G(x) + C$.
We have $z \not\in C$ but $z \in G(x)+C$ and so we obtain the strict inclusion $G(x) + C \supsetneq C$. By Proposition \ref{prop22}, this means that \eqref{eq:p} has no solution.
\end{proof}

If the ordering cone $C$ is not regular for $F$, we have similar result, which follows easily from the theorem.

\begin{cor}
	Let $F:\R^n\rightrightarrows \R^q$ be a polyhedral convex set-valued mapping and let $C \subseteq \R^q$ be a polyhedral convex ordering cone. Then  \eqref{eq:p} has a solution if and only if \eqref{eq:p} is feasible,
		\begin{equation}\label{eq:cond_cor}
			-G_C(0) \cap G_C(\R^n) \subseteq G_C(0)
		\end{equation}
		and 
		\begin{equation}\label{eq:cond1_cor}
			G_C(0) \supseteq K_{F_C}.
		\end{equation}		
\end{cor}
\begin{proof}
	The order cone $\bar C \coloneqq G(0) + C$ is regular for the set-valued mapping $\bar F \coloneqq F_C$, see Proposition \ref{prop11}. Replacing $F$ by $\bar F$ and $C$ by $\bar C$ does not change problem \eqref{eq:p}. Finally, the condition \eqref{eq:cond} and \eqref{eq:cond1} applied to $\bar F$ and $\bar C$ lead to the conditions \eqref{eq:cond_cor} and \eqref{eq:cond1_cor}.
\end{proof}

\section{Connection to vector linear programming}\label{sec:vr}

Vector linear programming is a particular case of polyhedral convex set optimization. Consider the vector linear program
	\leqnomode
	\begin{gather}\label{eq:vlp}\tag{VLP}
	  \min\text{$\!_C$} Mx\; \text{ s.t. }\; A x \geq b,
	\end{gather}
	\reqnomode
where $M \in \R^{q\times n}$, $A \in \R^{m \times n}$, $b \in \R^m$ and $C \subseteq \R^q$ being a polyhedral convex ordering cone. 
A tuple $(\bar S,\hat S)$ is called a solution for \eqref{eq:vlp} if $(\bar S,\hat S)$ is a solution of \eqref{eq:p} for the objective mapping
\begin{equation}\label{eq:obj1}
	F(x)\coloneqq \left\{ \begin{array}{cl}
			\Set{Mx} + C &\text{  if } Ax \geq b \\
			\emptyset  &\text{ otherwise }
		\end{array}\right.
\end{equation}
and the ordering cone $C$. For this objective mapping we have 
$$ G(x) = \left\{\begin{array}{cl}
	\{Mx\} + C & \text{ if } A x \geq 0 \\
	\emptyset & \text{ otherwise.} 
\end{array}\right.$$
If $C$ is free of lines, this solution concept coincides with the one in \cite{LoeWei16}. In the general case, it coincides with the solution concept in \cite[Section 4]{Weissing20}. Note that the ordering cone $C$ is regular for the objective mapping $F$ as we have $\lin C \subseteq G(0) = C$. Moreover, we have $\QQ = G(\R^n)$.

\begin{prop}\label{prop8}
	For a feasible vector linear program \eqref{eq:vlp} and its associated polyhedral convex set optimization problem \eqref{eq:p} with objective function of the form \eqref{eq:obj1}, condition \eqref{eq:cond} in Theorem \ref{thm2} implies $K=C$.
\end{prop}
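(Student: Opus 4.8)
The plan is to show $K\subseteq C$; the reverse inclusion $C\subseteq K$ is already available, since for the objective mapping \eqref{eq:obj1} we have $G(0)=\Set{0}+C=C$, and Proposition~\ref{prop4} gives $G(0)\subseteq K$. (Feasibility of \eqref{eq:vlp}, hence $\gr F\neq\emptyset$, is what legitimizes the recession computation behind $G(0)=C$.)

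So I would start from an arbitrary $y\in K$ and unwind the definition: there is $x\in\R^n$ with $Ax\ge 0$, with $0\in G(x)=\Set{Mx}+C$, and with $y\in G(x)=\Set{Mx}+C$. The first membership is equivalent to $Mx\in -C$, and the second to $y-Mx\in C$. The key extra observation is that $Mx$ itself lies in $G(x)$: indeed $Mx=Mx+0\in\Set{Mx}+C$ because $0\in C$, hence $Mx\in G(x)\subseteq G(\R^n)=\QQ$.

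Now I would invoke hypothesis \eqref{eq:cond}: from $Mx\in -C$ and $Mx\in\QQ$ we get $Mx\in -C\cap\QQ\subseteq C$, so in fact $Mx\in C\cap(-C)=\lin C$. Writing $y=Mx+(y-Mx)$ with $Mx\in C$ and $y-Mx\in C$, and using that $C$ is a convex cone so that $C+C=C$, we conclude $y\in C$. This gives $K\subseteq C$, and together with $C\subseteq K$ we obtain $K=C$.

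I do not anticipate a genuine obstacle here: the argument is short once one notices the trick that $Mx\in G(x)$ (via $0\in C$), which is exactly what lets \eqref{eq:cond} be applied to $Mx$. The only points requiring a word of care are the identity $G(0)=C$ (so that $C\subseteq K$ via Proposition~\ref{prop4}) and the elementary fact $C+C=C$ for a convex cone.
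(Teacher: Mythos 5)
Your proof is correct and follows essentially the same route as the paper: $C\subseteq K$ via $G(0)=C$ and Proposition~\ref{prop4}, and $K\subseteq C$ by noting $Mx\in -C\cap\QQ$ (your explicit remark that $Mx\in G(x)$ since $0\in C$ is exactly the step the paper leaves implicit) and then applying \eqref{eq:cond} together with $C+C=C$.
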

\begin{proof}
	By Proposition \ref{prop4}, we have $G(0) \subseteq K$. The special form of $F$ implies $G(0)=C$. Hence the inclusion $C \subseteq K$ holds. To show the converse inclusion, let $y \in K$. Then there exists $x \in \R^n$ with $Ax \geq 0$ such that $y \in \Set{Mx} + C$ and $0 \in \Set{Mx} + C$. We see that $Mx \in -C \cap \QQ$. Condition \eqref{eq:cond} yields $Mx \in C$. Thus $y \in \Set{Mx} + C \subseteq C+C = C$.  
\end{proof}

We recover a known existence result for vector linear programs, see e.g.\ \cite[Theorem 4.1]{Weissing20} in combination with \cite[Theorem 3.3]{Loe}. The same result was also recovered in \cite{Loe} as a direct consequence of \cite[Theorem 3.3]{Loe}, see Proposition \ref{prop22} above. Here it follows directly from Theorem \ref{thm2}, which is also based on \cite[Theorem 3.3]{Loe}, and Proposition \ref{prop8}.

\begin{cor}\label{cor8}
	The vector linear program \eqref{eq:vlp} has a solution if and only if it is feasible and \eqref{eq:cond} holds.
\end{cor}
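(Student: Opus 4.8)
The plan is to derive Corollary \ref{cor8} as a direct specialization of Theorem \ref{thm2} to the objective mapping \eqref{eq:obj1}, using Proposition \ref{prop8} to eliminate condition \eqref{eq:cond1}. First I would recall that, as observed just before Proposition \ref{prop8}, the ordering cone $C$ is regular for the objective mapping $F$ associated with \eqref{eq:vlp}, since $\lin C \subseteq G(0) = C$. This means Theorem \ref{thm2} applies verbatim: \eqref{eq:vlp} has a solution if and only if it is feasible, \eqref{eq:cond} holds, and \eqref{eq:cond1} holds. So the only thing left to show is that, in the present setting, condition \eqref{eq:cond1} is redundant given the other two.

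For the nontrivial direction, assume \eqref{eq:vlp} is feasible and \eqref{eq:cond} holds; I would then invoke Proposition \ref{prop8}, which under exactly these hypotheses gives $K = C$, and in particular $C \supseteq K$, i.e.\ \eqref{eq:cond1}. Hence all three conditions of Theorem \ref{thm2} are met and \eqref{eq:vlp} has a solution. Conversely, if \eqref{eq:vlp} has a solution, then by Theorem \ref{thm2} it is feasible and \eqref{eq:cond} holds; this direction needs no appeal to Proposition \ref{prop8} at all. Combining the two directions yields the stated equivalence.

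The argument is short and essentially bookkeeping; the only subtlety worth flagging is making sure the hypotheses of Proposition \ref{prop8} are genuinely available, namely that we are in the feasible case and that \eqref{eq:cond} is assumed — both of which are exactly what we have in the forward direction. There is no real obstacle here: all the analytical content has already been absorbed into Theorem \ref{thm2} (via \cite[Theorem 3.3]{Loe}) and Proposition \ref{prop8} (via the special form $G(0) = C$ and $\QQ = G(\R^n)$). I would therefore present the proof as a one-paragraph deduction.

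\begin{proof}
	Since $\lin C \subseteq G(0) = C$, the ordering cone $C$ is regular for the objective mapping $F$ of the form \eqref{eq:obj1}, so Theorem \ref{thm2} is applicable. If \eqref{eq:vlp} has a solution, then by Theorem \ref{thm2} it is feasible and \eqref{eq:cond} holds. Conversely, assume \eqref{eq:vlp} is feasible and \eqref{eq:cond} holds. By Proposition \ref{prop8}, $K = C$, hence $C \supseteq K$, that is, \eqref{eq:cond1} holds. Thus all hypotheses of Theorem \ref{thm2} are satisfied and \eqref{eq:vlp} has a solution.
\end{proof}
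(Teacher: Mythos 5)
Your proposal is correct and follows exactly the route the paper intends: the text preceding Corollary \ref{cor8} states that it follows directly from Theorem \ref{thm2} (using the regularity $\lin C \subseteq G(0)=C$ noted for the objective \eqref{eq:obj1}) together with Proposition \ref{prop8}, which makes \eqref{eq:cond1} redundant under feasibility and \eqref{eq:cond}. Nothing further is needed.
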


The considerations in this section show that the natural ordering cone does not play any role in vector linear programming. It is relevant for more general set optimization problems only. 

In the second part of this section we want to discuss the \emph{vectorial relaxation}, which was introduced in \cite{LoeSch13} for bounded polyhedral convex set optimization problems and was extended in \cite{HeyLoe} to the unbounded case. The idea is to assign a vector linear problem to a given polyhedral convex set optimization problem such that both problems have the same upper image or infimum. This allows to compute a finite infimizer for \eqref{eq:p} by solving a vector linear program, see \cite{LoeSch13, HeyLoe} for details. As pointed out in \cite{LoeSch13}, \eqref{eq:p} and its vectorial relaxation behave differently with respect to minimality. This is the reason why a specific method for computing minimizers has been developed in \cite{LoeSch13} (for bounded problems only).

Let $F:\R^n\rightrightarrows \R^q$ be a polyhedral convex set-valued mapping. We assign to $F$ the mapping
$$ \hat F :\R^n\times \R^q \rightrightarrows \R^q,\quad \hat F(x,u) = \left\{ \begin{array}{cl}
\Set{u} + G(0) & \text{ if } u \in F(x)\\
\emptyset      & \text{ otherwise.}
\end{array}\right.$$
We see that $\hat F$ is of the form \eqref{eq:obj1}, i.e., problem \eqref{eq:p} with objective function $\hat F$ and ordering cone $G(0)$ can be seen as a reformulation of a vector linear program.
The graph of $\hat F$ can be written as
$$ \gr \hat F = \Set{(x,u,y) \in \R^n \times \R^q \times \R^q \given (0,y-u) \in \gr G,\;(x,u) \in \gr F }.$$
Consequently, the recession mapping of $\hat F$ is 
$$ \hat G :\R^n\times \R^q \rightrightarrows \R^q,\quad \hat G(x,u) = \left\{ \begin{array}{cl}
\Set{u} + G(0) & \text{ if } u \in G(x)\\
\emptyset      & \text{ otherwise.}
\end{array}\right.$$

By Proposition \ref{prop8}, we see that $-G(0) \cap \QQ \subseteq G(0)$ implies that the natural ordering cone $\hat K$ of $\hat F$ is just $G(0)$.

The \emph{vectorial relaxation} of the polyhedral convex set optimization problem \eqref{eq:p} is defined as:
\leqnomode
\begin{gather}\label{eq:vr}\tag{VR}
  \min \hat F(x,y) + C\quad \text{ s.t. }\; (x,y) \in \R^n\times \R^q.
\end{gather}
\reqnomode

\begin{prop}\label{prop5} 
	A polyhedral convex cone $C$ is regular for $F$ if and only if it is regular for $\hat F$.
\end{prop}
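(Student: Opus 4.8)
The plan is to compare the regularity conditions \eqref{eq:reg} for $F$ and $\hat F$ directly, translating each inclusion through the explicit descriptions of the recession data. Recall regularity of $C$ for $F$ reads $\lin C \subseteq G(0) \subseteq C$, while regularity for $\hat F$ reads $\lin C \subseteq \hat G(0,0) \subseteq C$, since $\hat F$ takes values in $\R^q$ and is ordered by the same cone $C \subseteq \R^q$. So the whole statement reduces to the single identity
\begin{equation*}
	\hat G(0,0) = G(0).
\end{equation*}

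First I would compute $\hat G(0,0)$ from the given formula for $\hat G$: by definition $\hat G(0,0) = \Set{u} + G(0)$ when $u \in G(0)$, evaluated at $u = 0$, which immediately gives $\hat G(0,0) = \Set{0} + G(0) = G(0)$ — using that $0 \in G(0)$, since $G(0)$ is a polyhedral convex cone (as recalled after \eqref{eq:451}). Hence the two middle terms in the respective regularity conditions coincide, the outer terms ($\lin C$ on the left, $C$ on the right) are literally the same in both conditions, and therefore $\lin C \subseteq G(0) \subseteq C$ holds if and only if $\lin C \subseteq \hat G(0,0) \subseteq C$. This is exactly the claimed equivalence.

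There is no real obstacle here; the only point requiring a word of care is to make sure that $\hat G$ — and not merely $\hat F$'s graph — is the object whose value at the origin governs regularity of $\hat F$, i.e.\ that the recession mapping of $\hat F$ is indeed the $\hat G$ displayed in the text (this was already asserted in the excerpt, computed from $\gr \hat F$). Granting that, the argument is the one-line identity above together with the observation that $\hat F$ and $F$ share the same ambient space $\R^q$ and hence the same candidate ordering cones. I would therefore write the proof as: compute $\hat G(0,0) = G(0)$ using $0 \in G(0)$, then note that this makes the defining inclusions \eqref{eq:reg} for $F$ and for $\hat F$ word-for-word identical, from which the equivalence is immediate.
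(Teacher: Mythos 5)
Your proof is correct and follows essentially the same route as the paper: the paper's proof also rests on the identity $G(0) = \hat G(0,0)$ together with the definition of regularity in \eqref{eq:reg}, and you simply verify that identity explicitly via $0 \in G(0)$. (The paper additionally cites $\QQ = \hat\QQ$, but as you observe this is not needed since \eqref{eq:reg} involves only the recession mapping at the origin.)
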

\begin{proof}
	Follows from $G(0)= \hat G(0,0)$ and $\QQ = \hat \QQ$ and the definition of regularity in \eqref{eq:reg}.
\end{proof}

\begin{cor}\label{cor15} Let $F:\R^n\rightrightarrows \R^q$ be a polyhedral convex set-valued mapping and let $C\subseteq \R^q$ be a  polyhedral convex ordering cone which is regular for $F$. Let \eqref{eq:p} be feasible. Then its vectorial relaxation \eqref{eq:vr} has a solution if and only if $-C\cap \QQ \subseteq C$.
\end{cor}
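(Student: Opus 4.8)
The plan is to derive Corollary \ref{cor15} directly from Theorem \ref{thm2} applied to the mapping $\hat F$ together with the ordering cone $C$. By definition, the vectorial relaxation \eqref{eq:vr} is precisely problem \eqref{eq:p} with objective mapping $\hat F$ and ordering cone $C$, so the existence of a solution is governed by Theorem \ref{thm2} for that instance. We are given that $C$ is regular for $F$, hence by Proposition \ref{prop5} it is also regular for $\hat F$, so Theorem \ref{thm2} is applicable. It then remains to check that the three conditions of Theorem \ref{thm2} for $\hat F$ reduce, under the standing feasibility hypothesis, to the single condition $-C\cap\QQ\subseteq C$.

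First I would record that the upper image of the homogeneous problem associated with $\hat F$ coincides with $\QQ$: since $\hat G(x,u)=\Set{u}+G(0)$ whenever $u\in G(x)$, we have $\bigcup_{(x,u)}\hat G(x,u)=\bigcup_x\bigl(G(x)+G(0)\bigr)=\bigcup_x G(x)=G(\R^n)$, using \eqref{eq:451} for $G$, and therefore $\hat\QQ=C+G(\R^n)$. Hence condition \eqref{eq:cond} of Theorem \ref{thm2} for $\hat F$, namely $-C\cap\hat\QQ\subseteq C$, is exactly $-C\cap(C+G(\R^n))\subseteq C$. When the original problem \eqref{eq:p} has $\PP$ free of lines this is the same as $-C\cap\QQ\subseteq C$; in general one should simply observe that $\hat\QQ$ equals the upper image $\QQ$ of \eqref{eq:q}, so condition \eqref{eq:cond} for $\hat F$ is literally $-C\cap\QQ\subseteq C$. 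This matches the assertion of the corollary.

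Second, I would dispose of condition \eqref{eq:cond1}, i.e.\ $C\supseteq\hat K$, where $\hat K$ is the natural ordering cone of $\hat F$. Since $\hat F$ has the special form \eqref{eq:obj1} (with the roles of the data matrices played by the description of $\gr F$ and $\gr G$), Proposition \ref{prop8} applies: for the vector-linear-type mapping $\hat F$, condition \eqref{eq:cond} already forces $\hat K=G(0)$. But regularity of $C$ for $F$ gives $G(0)\subseteq C$, so \eqref{eq:cond1} for $\hat F$ holds automatically once \eqref{eq:cond} does. (This is exactly the remark made in the paragraph preceding the definition of \eqref{eq:vr}, that $-G(0)\cap\QQ\subseteq G(0)$ yields $\hat K=G(0)$; one has to be a little careful here, since Proposition \ref{prop8} is stated with the ordering cone equal to the $G(0)$ built into $\hat F$'s definition, whereas here we take the ordering cone $C$ — but the argument of Proposition \ref{prop8} goes through verbatim with $C$ in place of that cone, using $G_{\hat F}(0)=G(0)\subseteq C$.)

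Putting these together: feasibility of \eqref{eq:p} is equivalent to feasibility of \eqref{eq:vr} (the feasible set is $\dom\hat F$, which is nonempty iff $\dom F$ is), condition \eqref{eq:cond} for $\hat F$ is $-C\cap\QQ\subseteq C$, and condition \eqref{eq:cond1} for $\hat F$ is implied by \eqref{eq:cond}. Hence Theorem \ref{thm2} gives: \eqref{eq:vr} has a solution iff it is feasible and $-C\cap\QQ\subseteq C$, which under the stated feasibility assumption is the claim. I expect the only genuinely delicate point to be the precise identification of $\hat\QQ$ with $\QQ$ and the verification that Proposition \ref{prop8}'s proof is insensitive to replacing the internal cone $G(0)$ by the larger regular cone $C$; the rest is a direct substitution into the already-proved Theorem \ref{thm2}.
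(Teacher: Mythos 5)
Your proposal is correct and follows essentially the same route as the paper: apply Theorem \ref{thm2} to \eqref{eq:vr}, identify $\hat\QQ=\QQ$ so that \eqref{eq:cond} is unchanged, and use the Proposition \ref{prop8}-type argument together with regularity to see that \eqref{eq:cond1} for $\hat F$ comes for free. Your extra care about the mismatch between the cone $G(0)$ built into $\hat F$ and the ordering cone $C$ is well placed and indeed resolves exactly the point the paper's proof leaves implicit (one gets $\hat K\subseteq C$, and with regularity even $\hat K=G(0)$).
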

\begin{proof} This follows from Theorem \ref{thm2} applied to \eqref{eq:vr}. Since $\QQ = \hat \QQ$, condition \eqref{eq:cond} is the same for \eqref{eq:p} and \eqref{eq:vr}. By Proposition \ref{prop8}, \eqref{eq:cond} implies $\hat K = \hat G(0,0)$. Since $C$ is regular for $\hat F$ we deduce $C \supseteq \hat K$, which is condition \eqref{eq:cond1} for \eqref{eq:vr}.
\end{proof}

We close this section by a re-formulation of previous results which points out the difference between a polyhedral convex set optimization problem \eqref{eq:p} and its vectorial relaxation \eqref{eq:vr} in regard to the existence of solutions. The natural ordering cone plays a key role in describing this difference.
 
\begin{cor} \label{cor68} Let $F:\R^n\rightrightarrows \R^q$ be a polyhedral convex set-valued mapping and let $C\subseteq \R^q$ be a  polyhedral convex ordering cone which is regular for $F$.
Then, \eqref{eq:p} has a solution if and only if
\begin{enumerate}[(i)]
	\item the vectorial relaxation \eqref{eq:vr} has a solution and
	\item $C \supseteq K$
\end{enumerate}
\end{cor}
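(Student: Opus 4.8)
The plan is to derive Corollary \ref{cor68} directly from Theorem \ref{thm2} together with Corollary \ref{cor15}, since both characterize solution existence in terms of the same two ingredients: condition \eqref{eq:cond} and an inclusion of the relevant natural ordering cone in $C$. First I would reduce to the feasible case, as both Theorem \ref{thm2} and Corollary \ref{cor15} already do; infeasibility trivially makes both sides false (no solution to \eqref{eq:p}, and \eqref{eq:vr} is infeasible as well since $\dom \hat F \ne \emptyset$ forces $\dom F \ne \emptyset$).

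Assuming feasibility, the forward direction is immediate: if \eqref{eq:p} has a solution, then by Theorem \ref{thm2} both \eqref{eq:cond} and \eqref{eq:cond1} hold. Condition \eqref{eq:cond1} is exactly (ii). For (i), Corollary \ref{cor15} says \eqref{eq:vr} has a solution iff $-C \cap \QQ \subseteq C$, which is \eqref{eq:cond}; hence (i) holds. For the converse, suppose (i) and (ii) hold. By Corollary \ref{cor15}, (i) gives $-C \cap \QQ \subseteq C$, i.e.\ \eqref{eq:cond}. Together with (ii), which is \eqref{eq:cond1}, Theorem \ref{thm2} yields that \eqref{eq:p} has a solution. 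So the whole argument is a short bookkeeping exercise matching up the two conditions.

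The only point requiring a little care — and the main (minor) obstacle — is making sure Corollary \ref{cor15}'s hypotheses are met whenever we invoke it: it requires $C$ to be regular for $F$ and \eqref{eq:p} to be feasible. Regularity of $C$ for $F$ is assumed in Corollary \ref{cor68}, and feasibility is the case we have reduced to; by Proposition \ref{prop5}, $C$ is then also regular for $\hat F$, so applying Theorem \ref{thm2} to \eqref{eq:vr} (as done inside the proof of Corollary \ref{cor15}) is legitimate. I would also note explicitly that feasibility of \eqref{eq:p} and feasibility of \eqref{eq:vr} are equivalent, which is what lets us treat the infeasible case uniformly.

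\begin{proof}
If \eqref{eq:p} is infeasible, then $\dom F = \emptyset$, hence $\dom \hat F = \emptyset$ and \eqref{eq:vr} is infeasible as well, so neither \eqref{eq:p} has a solution nor does (i) hold; the equivalence is trivially true. Assume therefore that \eqref{eq:p} is feasible; then so is \eqref{eq:vr}. Since $C$ is regular for $F$, Proposition \ref{prop5} shows $C$ is regular for $\hat F$, so Theorem \ref{thm2} applies to both \eqref{eq:p} and \eqref{eq:vr}, and Corollary \ref{cor15} is applicable.

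Assume \eqref{eq:p} has a solution. By Theorem \ref{thm2}, conditions \eqref{eq:cond} and \eqref{eq:cond1} hold. Condition \eqref{eq:cond1} is precisely (ii). Since $\QQ = \hat\QQ$, condition \eqref{eq:cond} for \eqref{eq:vr} coincides with \eqref{eq:cond} for \eqref{eq:p}, so by Corollary \ref{cor15} the vectorial relaxation \eqref{eq:vr} has a solution, which is (i).

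Conversely, assume (i) and (ii). By Corollary \ref{cor15}, (i) implies $-C\cap\QQ \subseteq C$, i.e.\ condition \eqref{eq:cond} holds. Condition (ii) is condition \eqref{eq:cond1}. Hence, by Theorem \ref{thm2}, \eqref{eq:p} has a solution.
\end{proof}
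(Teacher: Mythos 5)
Your proof is correct and follows exactly the paper's route: the paper also derives Corollary \ref{cor68} directly from Theorem \ref{thm2} and Corollary \ref{cor15}, matching \eqref{eq:cond} with (i) and \eqref{eq:cond1} with (ii); you merely spell out the bookkeeping (feasibility reduction, regularity via Proposition \ref{prop5}) that the paper leaves implicit.
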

\begin{proof}
	Follows from Theorem \ref{thm2} and Corollary \ref{cor15}
\end{proof}

\section{A modified solution concept}\label{sec:sol}

In Corollary \ref{cor68} we observe that the vectorial relaxation \eqref{eq:vr} can have a solution in case the original problem \eqref{eq:p} does not have a solution. Then the ordering cone $C$ is not a superset of the natural ordering cone $K$. Enlarging the ordering cone to $C+K$ resolves this situation. However, using the original solution concept this has the disadvantage that information of the original problem gets lost.

We introduce in this section a slightly modified solution concept: First we compute a finite infimizer with respect to the original ordering cone $C$. Secondly we require the points and directions of this infimizer to be minimizers with respect to the enlarged ordering cone $C+K$. Third, we indicate minimizing directions the image of which belongs to $K$. 

The \emph{algebraic kernel} of a polyhedral convex set-valued mapping $F:\R^n \rightrightarrows \R^q$ is defined as
$$ \ker F \coloneqq \Set{x \in \R^n \given 0 \in G(x)}.$$
Of course, $\ker F$ is a polyhedral convex cone in $\R^n$.
The natural ordering cone $K$ of $F$ can be expressed as the image of $\ker F$ under $F$'s recession mapping $G$, i.e.,
$$ K = G[\ker F] \coloneqq \Set{G(x)\given x \in \ker F}.$$

As we deal now with different ordering cones, we adapt the notation. A finite infimizer for problem \eqref{eq:p} with objective $F$ and ordering cone $C$ is now called \emph{finite $C$-infimizer of $F$} and a minimizer for \eqref{eq:p} is called \emph{$C$-minimizer of $F$}. Moreover a solution to \eqref{eq:p} is also called $C$-solution.

Consider problem \eqref{eq:p} for a polyhedral convex set-valued objective mapping $F:\R^n\rightrightarrows \R^q$ and a polyhedral convex ordering cone $C \subseteq \R^q$ which is regular for $F$. 

\begin{defn}\label{def:sol}
A triple $(\bar{S},\hat{S},\tilde{S})$ of finite sets $\bar{S} \subseteq \dom F$, $\bar{S} \neq \emptyset$, $\hat{S} \subseteq \dom G \setminus \ker F$ and $\tilde{S} \subseteq \ker F$ is called {\em solution} to \eqref{eq:p} if 
\begin{enumerate}[(i)]
	\item $(\bar S, \hat S \cup \tilde S)$ is a finite $C$-infimizer of $F$, 
	\item the points in $\bar S$ and the directions in $\hat S \cup \tilde S$ are $(C+K)$-minimizers of $F$.
\end{enumerate}
\end{defn}

\begin{prop}
	If $(\bar{S},\hat{S},\tilde{S})$ is a solution to \eqref{eq:p} in the sense of Definition \ref{def:sol}, then $(\bar{S},\hat{S})$ is a ($C+K$)-solution to \eqref{eq:p} in the conventional sense. 
\end{prop}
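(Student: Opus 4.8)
The plan is to show that the triple $(\bar S,\hat S,\tilde S)$ from Definition \ref{def:sol} gives rise to a pair $(\bar S,\hat S)$ that satisfies the conventional definition of a $(C+K)$-solution, i.e.\ that $(\bar S,\hat S)$ is a finite $(C+K)$-infimizer of $F$ with all points in $\bar S$ being $(C+K)$-minimizers and all directions in $\hat S$ being $(C+K)$-minimizers. The minimality conditions are essentially immediate: condition (ii) of Definition \ref{def:sol} already asserts that the points of $\bar S$ and the directions of $\hat S\cup\tilde S\supseteq\hat S$ are $(C+K)$-minimizers of $F$, so nothing more is needed there. The non-obvious part is the infimizer condition, and here the key observation is that adding $K$ to the ordering cone lets us absorb the ``extra'' directions in $\tilde S$.

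First I would recall from Definition \ref{def:sol}(i) that $(\bar S,\hat S\cup\tilde S)$ is a finite $C$-infimizer, i.e.\
\[
  \PP \subseteq C + \conv\bigcup_{x\in\bar S}F(x) + \cone\bigcup_{x\in\hat S\cup\tilde S}G(x).
\]
Adding $K$ to both sides and using that $\PP + K = \PP$ (since $K\subseteq G(\R^n)\subseteq\QQ=0^+\PP$ by Proposition \ref{prop4}, so $\PP+K\subseteq\PP+\QQ=\PP$, and the reverse inclusion is trivial as $0\in K$), the upper image with respect to the enlarged cone $C+K$ is still $\PP$. Then I would split the cone generated by $\hat S\cup\tilde S$ as $\cone\bigcup_{x\in\hat S}G(x) + \cone\bigcup_{x\in\tilde S}G(x)$. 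For every $x\in\tilde S\subseteq\ker F$ we have $0\in G(x)$, hence by definition $G(x)\subseteq K$, and therefore $\cone\bigcup_{x\in\tilde S}G(x)\subseteq K$. Absorbing this term into the ordering cone $C+K$ yields
\[
  \PP \subseteq (C+K) + \conv\bigcup_{x\in\bar S}F(x) + \cone\bigcup_{x\in\hat S}G(x),
\]
which is precisely the statement that $(\bar S,\hat S)$ is a finite $(C+K)$-infimizer of $F$.

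The main obstacle, modest as it is, is the bookkeeping around the two cones: one must check that $G(0)\subseteq C+K$ (so that $C+K$ is regular for $F$, which it is since $C$ is regular and $G(0)\subseteq C$), that $\hat S\subseteq\dom G\setminus\{0\}$ as required for the direction part of a conventional solution (this follows from $\hat S\subseteq\dom G\setminus\ker F$ and $0\in\ker F$), and that the nonemptiness requirement $\bar S\neq\emptyset$ carries over verbatim. Once these routine verifications are in place, together with the minimality statements inherited directly from Definition \ref{def:sol}(ii), the conclusion that $(\bar S,\hat S)$ is a $(C+K)$-solution in the conventional sense follows immediately.
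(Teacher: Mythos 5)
Your proof is correct and takes essentially the same route as the paper, which simply declares the statement ``obvious by the definitions''; your write-up supplies exactly the implicit verification: $\PP_{C+K}=\PP$ since $K\subseteq\QQ=0^+\PP$, the $\tilde S$-directions are absorbed into $K$ because $x\in\ker F$ gives $G(x)\subseteq K$, $0\in\ker F$ guarantees $\hat S\subseteq\dom G\setminus\Set{0}$, and minimality is inherited directly from Definition \ref{def:sol}(ii). The only cosmetic slip is the parenthetical claim that $C+K$ is regular for $F$ (regularity would also require $\lin(C+K)\subseteq G(0)$, which you do not check), but since regularity is not part of the conventional solution concept this aside is unnecessary and does not affect the argument.
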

\begin{proof}
	This is obvious by the definitions.
\end{proof}

The following proposition shows a property of kernel elements, which occur in Definition \ref{def:sol}.

\begin{prop} Let $C$ be a regular ordering cone for $F$.
	Let $x \in \dom F$ and $\tilde x \in \ker F$ such that $G(\tilde x) \not \subseteq C$. Then $F(x + \tilde x ) + C \supsetneq F(x)+C$.
\end{prop}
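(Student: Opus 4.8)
The goal is to show that adding the kernel direction $\tilde x$ to a feasible point $x$ strictly enlarges the value set modulo $C$, provided $G(\tilde x)\not\subseteq C$. The natural starting point is the decomposition formula for polyhedral convex set-valued mappings: since $\gr F$ is a polyhedron and $(x,F(x)) $ together with the recession direction $(\tilde x, G(\tilde x))$ lie in $\gr F$ (using $\tilde x\in\dom G$), one has the inclusion $F(x)+G(\tilde x)\subseteq F(x+\tilde x)$. Consequently $F(x+\tilde x)+C\supseteq F(x)+G(\tilde x)+C$. The plan is to deduce from this that $F(x+\tilde x)+C\supseteq F(x)+C$, which is the $\supseteq$ half, and then to rule out equality using the hypothesis $G(\tilde x)\not\subseteq C$.

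For the non-strict inclusion, I would argue as follows. Since $\tilde x\in\ker F$ we have $0\in G(\tilde x)$, so $F(x)+G(\tilde x)\supseteq F(x)+\{0\}=F(x)$, giving $F(x+\tilde x)+C\supseteq F(x)+G(\tilde x)+C\supseteq F(x)+C$ immediately. For strictness, suppose to the contrary that $F(x+\tilde x)+C=F(x)+C$. Combining with $F(x)+G(\tilde x)+C\subseteq F(x+\tilde x)+C$ yields $F(x)+G(\tilde x)+C\subseteq F(x)+C$. Fix any $\bar y\in F(x)$ (nonempty since $x\in\dom F$); then $\bar y+G(\tilde x)+C\subseteq F(x)+C$, and iterating, $\bar y + \cone(G(\tilde x)) + C \subseteq F(x)+C$ because $G(\tilde x)$ is a convex cone and $F(x)+C$ is a convex polyhedron. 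The key recession argument: $F(x)+C$ is a polyhedron whose recession cone contains $G(0)+C$ but, more to the point, a translate of $G(\tilde x)$ being contained in $F(x)+C$ forces $G(\tilde x)\subseteq 0^+(F(x)+C)=G(0)+C=C$ (using \eqref{eq:451}-type identities and the regularity of $C$, which gives $G(0)+C=C$). This contradicts $G(\tilde x)\not\subseteq C$, so the inclusion must be strict.

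The main obstacle is making the step "$\bar y+G(\tilde x)\subseteq$ a polyhedron $P$ implies $G(\tilde x)\subseteq 0^+P$" fully rigorous and identifying $0^+(F(x)+C)$ correctly. Here I would use that $F(x)$ is a polyhedron with $0^+F(x)=G(0)$ (the common recession cone of the values, cf. the discussion around \eqref{eq:451} and \cite[Prop.~2.1]{HeyLoe}), so $0^+(F(x)+C)=0^+F(x)+0^+C=G(0)+C$, which equals $C$ by regularity \eqref{eq:reg}. Since a cone contained in a translate of a polyhedron is contained in that polyhedron's recession cone, we get $G(\tilde x)\subseteq C$, the desired contradiction. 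An alternative, perhaps cleaner route avoiding the iteration: directly take $z\in G(\tilde x)\setminus C$ (exists by hypothesis); then for $\bar y\in F(x)$ and any $t>0$, $\bar y+tz\in F(x)+G(\tilde x)\subseteq F(x+\tilde x)$, so if $F(x+\tilde x)+C=F(x)+C$ then $\bar y+tz\in F(x)+C$ for all $t>0$, whence $z\in 0^+(F(x)+C)=C$, contradiction. I would present this second version as it is the most economical and sidesteps the conic-hull iteration entirely.
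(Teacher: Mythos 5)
Your overall skeleton is the same as the paper's: the inclusion $F(x)+G(\tilde x)\subseteq F(x+\tilde x)$, the observation $0\in G(\tilde x)$ for the non-strict part, and the identification $0^+\bigl(F(x)+C\bigr)=G(0)+C=C$ via regularity to force strictness. However, the version you explicitly commit to presenting (the ``cleaner route'') has a genuine gap: it uses that $\bar y+tz\in F(x)+G(\tilde x)$ for \emph{all} $t>0$, i.e.\ that $tz\in G(\tilde x)$ for all $t>0$. Only $\gr G$ is a cone; the slice $G(\tilde x)$ at a nonzero $\tilde x$ is in general just a polyhedral convex set, and positive scaling gives $tz\in G(t\tilde x)$, not $tz\in G(\tilde x)$. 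The hypothesis you are contradicting, $F(x+\tilde x)+C=F(x)+C$, says nothing about $F(x+t\tilde x)$, so the argument does not close. The failure is not hypothetical: in the paper's Example \ref{ex11} one has $\tilde x=(1,0)^T\in\ker F$, $z=(0,1)^T\in G(1,0)\setminus C$, but $2z=(0,2)^T\notin G(1,0)$, so exactly the situation this proposition is designed for breaks your key step. The same misconception (``$G(\tilde x)$ is a convex cone'') also appears as the justification in your first sketch.

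The good news is that your first sketch is repairable and, once repaired, amounts to the paper's proof with the strictness step made explicit. Since $\tilde x\in\ker F$ gives $0\in G(\tilde x)$ and $G(\tilde x)$ is convex, one has $G(\tilde x)+G(\tilde x)=2G(\tilde x)$; hence from the assumed equality, which yields $F(x)+G(\tilde x)+C\subseteq F(x)+C$, induction gives $F(x)+kG(\tilde x)+C\subseteq F(x)+C$ for every $k\in\N$. Taking $\bar y\in F(x)$ and $z\in G(\tilde x)\setminus C$, this puts $\bar y+kz\in F(x)+C$ for all $k$, and since $F(x)+C$ is a (closed convex) polyhedron this forces $z\in 0^+\bigl(F(x)+C\bigr)=0^+F(x)+C=G(0)+C=C$, a contradiction. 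So: keep the Minkowski-sum iteration (it needs only convexity and $0\in G(\tilde x)$, not that $G(\tilde x)$ be a cone), and drop the scaling shortcut; with that change your argument matches the paper's, whose proof asserts the strict inclusion $F(x)+C\subsetneq F(x)+G(\tilde x)+C$ and leaves precisely this recession/iteration detail implicit.
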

\begin{proof}
	Since $G$ is the recession mapping of $F$, we have $F(x) + G(\tilde x) \subseteq F(x + \tilde x)$. By the definition of the kernel, $0 \in G(\tilde x)$ and thus the inclusion $F(x) \subseteq F(x) + G(\tilde x)$ holds. Since $C$ is regular for $F$, the recession cone of $F(x)$ belongs to $C$. The assumption $G(\tilde x) \not \subseteq C$ implies the strict inclusion $F(x) + C \subsetneq F(x) + G(\tilde x) + C$. Putting all this together yields the claim $F(x) + C \subsetneq F(x+\tilde x)+C$.
\end{proof}

We illustrate the modified solution concept and the results of this section by two examples. In the first example, the kernel component $\tilde S$ of the solution is empty while in the second example it is not. So the second example shows that the solution concept in Definition \ref{def:sol} can provide additional information, which gets lost by enlarging the ordering cone to $C+K$. 

\begin{ex}
	Let $F$ be defined as in Example \ref{ex11}. Then, the triple 
	$$(\bar{S},\hat{S},\tilde{S}) = (\Set{(0,0)^T},\Set{(0,1)^T},\emptyset)$$
	 is a solution in the sense of Definition \ref{def:sol}. As shown above, the condition $C\supseteq K$ is violated here. Nevertheless, this solution does not contain an element of $\ker F$. Since $(1,0)^T$ belongs to $\ker F$ and $G(1,0) \not\subseteq C$, we have
	$F(r,t)+C \supsetneq F(s,t)+ C$ whenever $r > s \geq 0$ and $t \geq 0$.
\end{ex}

\begin{ex} \cite[Example 17]{HeyLoe}
		Let $C=\R^2_+$ and let $F: \R^2 \rightrightarrows \R^2$ be defined by 
			$$ \gr F = \cone \left\{ \begin{pmatrix}\phantom{-}1 \\\phantom{-}0 \\ \phantom{-}2 \\-1 \end{pmatrix},\;
			                          \begin{pmatrix} 1 \\ 0 \\0 \\ 0 \end{pmatrix},\;
									   \begin{pmatrix} \phantom{-}0 \\  \phantom{-}1 \\ -1 \\  \phantom{-}2 \end{pmatrix},\;
										\begin{pmatrix} 0 \\ 0 \\1 \\ 0 \end{pmatrix},\;
									     \begin{pmatrix} 0 \\ 0 \\0 \\ 1 \end{pmatrix}\right\}.$$
		Since $\gr F$ is a cone, we have $F=G$. An H-representation of $\gr F$ is given by the inequalities
		$$ x_1 \geq 0,\; x_2 \geq 0,\;  x_2 + y_1 \geq 0,\; -3 x_2 + y_1 + 2 y_2 \geq 0,\; x_1 - 2 x_2 + y_2 \geq 0.$$
		We have 
		\begin{align*}
			G(\R^2) = \QQ &= \Set{y \in \R^2 \given y_1 + 2 y_2 \geq 0,\; 2 y_1 + y_2 \geq 0},\\
			K &= \Set{y \in \R^2 \given y_1 \geq 0,\; 2 y_1 + y_2 \geq 0},\\
			C &= G(0,0) = \R^2_+,			
		\end{align*}
		in particular, $C \subsetneq K \subsetneq \QQ$.
		Then, the triple 
			$$(\bar{S},\hat{S},\tilde{S}) = (\Set{(0,0)^T},\Set{(0,1)^T},\Set{(1,0)^T})$$
		is a solution in the sense of Definition \ref{def:sol}. It has a nonempty third component, which is not redundant. 
		Since $(1,0)^T$ belongs to $\ker F$ and $G(1,0) \not\subseteq C$, we have
	$F(r,t)+C \supsetneq F(s,t)+ C$ whenever $r > s \geq 0$ and $t \geq 0$.
\end{ex}


\end{document}